\documentclass[a4paper]{article}
\usepackage{amsthm,amsfonts,amsmath,amssymb,units}
\usepackage[abbrev,nobysame]{amsrefs}
\usepackage[cp1251]{inputenc}
\usepackage[english]{babel}
\usepackage[final]{graphicx}
\usepackage{setspace}
\usepackage[12pt]{extsizes}
\oddsidemargin=1mm \topmargin=-25mm
\textwidth 17.3cm \textheight 26cm

\begin{document}
\newcommand{\per}{{\rm per}}
\newcommand{\supp}{{\rm supp}}
\newtheorem{theorem}{Theorem}
\newtheorem{lemma}{Lemma}
\newtheorem{utv}{Proposition}
\newtheorem{svoistvo}{Property}
\newtheorem{sled}{Corollary}
\newtheorem{con}{Conjecture}
\newtheorem{zam}{Remark}
\newtheorem{quest}{Question}
\newtheorem{claim}{Claim}

\author{Anna A. Taranenko\thanks{Sobolev Institute of Mathematics,  Novosibirsk, Russia. Email: \texttt{taa@math.nsc.ru}}}
\title{Multidimensional quadrangle condition and cuboctahedra in latin hypercubes}
\date{April 1, 2026}

\maketitle

\begin{abstract}
The well-known quadrangle criterion  states that  a  latin square is  isotopic to  the  Cayley table of  a group if and only if all quadrangles spanned by the same triple of symbols coincide on the fourth symbol.  Gowers and Long (2020) reformulated this result in the following way: the Cayley tables of the most associative quasigroups  have the  maximum  number of octahedra.

In the present paper, we state the multidimensional quadrangle condition for $d$-dimensional latin hypercubes in terms of the reconstruction of submatrices of order $2$ from a bundle of $d+1$ entries and in terms of the maximal number of cuboctahedra. In particular, we show that  the most associative $d$-ary quasigroups  have Cayley tables such that  every $2$-dimensional plane is isotopic to a latin square that is principally isotopic to the Cayley table of a group. We also estimate the number of cuboctahedra in latin squares and hypercubes from below and provide  computational results.

\textbf{Keywords:} quadrangle condition, latin hypercube, latin square, multiary quasigroup, associativity

\textbf{MSC2020:} 15B15, 20N05
\end{abstract}

\section{Introduction}

A \textit{latin square} $L$ of order $n$ is an $n \times n$-array filled with $n$ symbols so that each row and each column of $L$ contains all $n$ symbols. Every latin square can be considered as the Cayley (multiplication) table of a quasigroup: a binary operation with the left and right division.  

There  is a natural question about the conditions under which a latin square is the multiplication table of a group, i.e., a quasigroup with associativity  and an identity element. Permutations of the rows, columns and symbols of a latin square  allow us to obtain another (isotopic) latin square that is the Cayley table of a quasigroup with an identity element.  About  a century ago, Brandt~\cite{brandt.qudrangle}  and  Frolov~\cite{frolov.groupCayley}  proposed a  simple criterion for associative latin squares that is now known as the \textit{quadrangle condition}.

\begin{theorem}[\cite{brandt.qudrangle,frolov.groupCayley}] \label{BrandtFrolovthm}
A latin square $L$ is isotopic to the Cayley table of a group if and only if for all $a,b,c$ forming a  submatrix
$$
\begin{array}{cc}
a & b \\ c & d
\end{array}
$$
the entry $d$ is uniquely defined.
\end{theorem}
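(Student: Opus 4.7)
The plan is to prove the two directions separately. For the forward implication, I would observe that any latin square isotopic to a group Cayley table has the explicit form $L(i,j) = \gamma(u_i v_j)$ for fixed sequences $(u_i), (v_j)$ of elements of a group $G$ and a bijection $\gamma$ from $G$ to the symbol set. A direct computation in $G$ then shows that for any four corners of a quadrangle one has $\gamma^{-1}(d) = \gamma^{-1}(c)\,\gamma^{-1}(a)^{-1}\,\gamma^{-1}(b)$, and since this expression depends only on $a,b,c$, the value $d$ is uniquely determined.

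For the converse, I would start by using isotopy to normalize $L$: first permute rows so that the first column reads $1, 2, \dots, n$, then permute columns so that the first row also reads $1, 2, \dots, n$. Writing $i \cdot j := L(i,j)$, the normalization gives $1 \cdot j = j$ and $i \cdot 1 = i$, so $1$ acts as a two-sided identity. Because $L$ is latin, $\cdot$ is a quasigroup operation, so once associativity is established $(L, \cdot)$ will be a group.

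The central step is to derive associativity $(s \cdot t) \cdot j = s \cdot (t \cdot j)$ by exhibiting two quadrangles that share the triple $(a, b, c) = (t,\ t \cdot j,\ s \cdot t)$. One quadrangle would sit on rows $1, s$ and columns $t, t \cdot j$, with fourth entry $s \cdot (t \cdot j)$; the other on rows $t, s \cdot t$ and columns $1, j$, with fourth entry $(s \cdot t) \cdot j$. The identity laws $1 \cdot x = x = x \cdot 1$ make the first three corners of both quadrangles equal $a, b, c$ respectively, and Theorem~\ref{BrandtFrolovthm}'s hypothesis then forces $s \cdot (t \cdot j) = (s \cdot t) \cdot j$. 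The cases $s = 1$ or $j = 1$ follow at once from the identity laws.

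The main obstacle I anticipate is discovering this pair of quadrangles: the whole purpose of the normalization is to create enough entries equal to their own row or column label, so that the identity laws can align the $a, b, c$ corners of two otherwise unrelated $2 \times 2$ subsquares. One must also verify the genuine-subsquare conditions, namely that the two chosen rows and the two chosen columns are distinct in each case, but these nondegeneracy requirements reduce to the already-handled cases $s = 1$ and $j = 1$.
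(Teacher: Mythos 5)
Your proposal is correct, but there is nothing in the paper to compare it against: Theorem~\ref{BrandtFrolovthm} is imported by citation from Brandt and Frolov and is never proved in the paper itself (the paper only proves the consequence recorded in Proposition~\ref{cubisotop}, which it derives \emph{from} the theorem). Judged on its own merits, your argument is the standard classical proof and is sound in both directions. The forward direction is a one-line group computation: writing $L(i,j)=\gamma(u_i v_j)$ gives $\gamma^{-1}(d)=\gamma^{-1}(c)\,\gamma^{-1}(a)^{-1}\,\gamma^{-1}(b)$, so $d$ depends only on $a,b,c$. For the converse, your normalization is legitimate (after the row permutation one has $L(1,1)=1$, so the subsequent column permutation fixes column $1$ and both identity laws survive; moreover row/column permutations do not change the set of $2\times 2$ subsquares, so the quadrangle hypothesis transfers to the normalized square), and the two quadrangles on rows $\{1,s\}$, columns $\{t, t\cdot j\}$ and on rows $\{t, s\cdot t\}$, columns $\{1,j\}$ do share the triple $(t,\ t\cdot j,\ s\cdot t)$ in corresponding positions, forcing $s\cdot(t\cdot j)=(s\cdot t)\cdot j$; your nondegeneracy analysis is also right, since cancellation shows each quadrangle degenerates exactly when $s=1$ or $j=1$, cases already settled by the identity laws. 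One small remark worth adding: your proof in fact yields the slightly stronger conclusion that the normalized square, hence $L$ up to row and column permutations alone, \emph{is} a group table — exactly the principal-isotopy refinement the paper needs later in Proposition~\ref{cubisotop}.
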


In~\cite{GowLong.approxgroups}, Gowers and Long raised the question of how associative a quasigroup  can be. As one possible measure of the degree of associativity, they proposed counting the number of pairs of identical $2 \times 2$ submatrices in a latin square, which they called ``octahedra''. Latin squares of order $n$  isotopic to the Cayley table of a group have $n^5$ ``octahedra'', which is the maximum possible number, and a larger number of ``octahedra''  corresponds to greater  associativity of a latin square.  

In the present paper, we use the term ``cuboctahedron'' instead of ``octahedron''. It was introduced in~\cite{KwanSahSaw.SubstrctLS}  by Kwan, Sah, Sawhney, and Simkin and refers to the  geometric  resemblance of this object  to a cuboctahedron in the corresponding $3$-partite $3$-uniform hypergraph.  In~\cite{KwanSahSaw.SubstrctLS},  it  was  shown that a random latin square of order $n$ has $ (4 + o (1))n^4$ cuboctahedra with high probability.

The main aim of the present paper is to state the quadrangle condition for latin hypercubes and multiary quasigroups.   In Section~\ref{defsec}, we introduce the multidimensional analogue of a cuboctahedron,  give the necessary definitions, and provide preliminary results. Section~\ref{maxnumbersec} is devoted to the proof of the multidimensional  quadrangle condition.  We  show that the Cayley tables of the  most associative  $d$-ary quasigroups  have  $2$-dimensional planes  isotopic to latin squares that are principally isotopic to the Cayley tables of groups.  As in the $2$-dimensional case, we also state the multidimensional quadrangle condition  in terms of  the reconstruction of  submatrices of order $2$ from a bundle of  entries and    the maximality of the number of cuboctahedra in a latin hypercube.

In addition, in Section~\ref{lowboundsec} we obtain several lower bounds on the number of cuboctahedra in latin squares and hypercubes.   In Section~\ref{enumersec}, we compare these bounds with the actual numbers of cuboctahedra in latin squares of orders $n \leq 6$ and  in $3$-dimensional and $4$-dimensional latin hypercubes of orders $4$ and $5$.

\section{Definitions and preliminaries} \label{defsec}

 A \textit{$d$-ary quasigroup $f$ of order $n$} is a $d$-ary operation $f:  \{ 0, \ldots, n-1 \}^d \rightarrow \{ 0, \ldots, n-1 \}$ such that the equation $x_0 = f(x_1, \ldots, x_d)$ has a unique solution for any one variable if all the other $d$ variables are specified arbitrarily.  A \textit{$d$-dimensional latin hypercube} $Q$ is the Cayley table of  a $d$-ary quasigroup of the same order.

A \textit{composition}  of a $d$-ary quasigroup $f$ and a $k$-ary quasigroup $g$ of order $n$  is the $(d+k-1)$-ary quasigroup  $h$ of order $n$ such that for some permutation $\sigma \in S_{d+k}$ it holds
$$h(x_1, \ldots, x_{d+k-1}) = x_{d+k} \Leftrightarrow g(x_{\sigma(1)}, \ldots, x_{\sigma(k)}) = f(x_{\sigma(k+1)},   \ldots, x_{\sigma(d+k)} ). $$
Given a binary quasigroup $f$  of order $n$,  the \textit{$d$-iterated quasigroup} is the $(d+1)$-ary quasigroup of order $n$ defined by the relation $x_0 = f( \cdots (f (f(x_1, x_2), x_3), \cdots, x_{d})$ 

Given $d$-dimensional  latin hypercube $Q$ of order $n$, corresponding to a $d$-ary quasigroup $f$,  $k\in \left\{0,\ldots,d\right\}$ and $1 \leq i_1 < \cdots < i_k \leq d$,  a \textit{$k$-dimensional plane of direction $(i_1, \ldots, i_k)$}  in $Q$ is a $k$-dimensional  latin subcube of the same order  obtained by   letting   variables $x_{i_1}, \ldots, x_{i_k}$ of $f$  vary from $0$ to $n-1$ and  fixing  the values of  the remaining $d-k$ variables of $f$.

A $1$-dimensional plane in $Q$ is said to be a \textit{line}, and a $(d-1)$-dimensional plane is a \textit{hyperplane}.    We will say that hyperplanes are \textit{parallel} if they have the same direction.

In what follows, $d$-dimensional latin hypercubes  $Q$ of order $n$  are also considered as  arrays $(q_\alpha)_{\alpha}$, whose entries are indexed by tuples  $  \alpha = (\alpha_1, \ldots, \alpha_d ),  \alpha_i \in \{ 0, \ldots, n-1\}$,  that are  filled by $n$ symbols so that in each line all symbols are different.

Latin hypercubes  $Q$ and $Q'$ are called \textit{isotopic} if one can be obtained from the other by permutations of  symbols and parallel hyperplanes. If $Q$ is obtained from $Q'$ only  by permutations of parallel hyperplanes, then $Q$ is \textit{principally isotopic} to $Q'$.

$d$-dimensional latin hypercubes $Q$ and $Q'$ corresponding to quasigroups $f$ and $f'$ are said to be \textit{conjugate} if  there exists a permutation $\pi \in S_{d+1}$ such that $x_0 = f(x_1, \ldots, x_d)$ if and only if $x_{\pi(0)} = f' (x_{\pi(1)}, \ldots, x_{\pi(d)})$.    Latin   hypercubes  $Q$ and  $Q'$   belong to the same \textit{main class}  if $Q$ is isotopic to any conjugacy of $Q'$.

Now we are ready to give the key definition of the paper. 
A \textit{cuboctahedron} in a $d$-dimensional  latin hypercube $Q$ is a collection  of $d$ pairs $(a_1^{i}, a_2^{i})$ and a   collection  of $d$ pairs $(b_1^{i}, b_2^{i})$, $i = 1, \ldots, d$ such that for all $j_1, \ldots, j_d \in \{1,2\}$ we have $q_{a_{j_1}^1, \ldots, a_{j_d}^d } = q_{b_{j_1}^1, \ldots, b_{j_d}^d } $. In other words, a cuboctahedron is a  pair of submatrices $\{ q_{a_{j_1}^1, \ldots, a_{j_d}^d } \}_{j_1, \ldots, j_d \in \{1,2\}}$  and   $\{ q_{b_{j_1}^1, \ldots, b_{j_d}^d } \}_{j_1, \ldots, j_d \in \{1,2\}}$   of order $2$,  (possibly degenerate and having width $1$ in some directions)   that have exactly the same  pattern of symbols.  We call entries   $q_{a_{1}^1, \ldots, a_{1}^d } $ and $q_{b_{1}^1, \ldots, b_{1}^d }$   of these submatrices    \textit{corners}.

We will say that a cuboctahedron  $\{ (a_1^{i}, a_2^{i}), (b_1^{i}, b_2^{i}) : i = 1, \ldots, d \}$ in a $d$-dimensional latin hypercube has the \textit{dimension} $k$ if there are exactly $k$ pairs $(a_1^{i}, a_2^{i})$ (and $(b_1^{i}, b_2^{i})$) such that $a_1^{i} \neq a_2^{i} $ (and $b_1^{i} \neq b_2^{i}$). In particular, a $0$-dimensional cuboctahedron is a pair of entries containing the same symbols, and a $1$-dimensional cuboctahedron is  a pair of dominos filled in the same way.

From  the definitions  it easily follows   that isotopic latin hypercubes have the same number of cuboctahedra of any given dimension.  Meanwhile, a conjugation of a latin hypercube may change its number of cuboctahedra. For example, we verified that every $4$-dimensional latin hypercube of order $5$, except for those from one of the main classes, has a conjugate latin hypercube with a different number of cuboctahedra.

As noted in~\cite{GowLong.approxgroups},  Theorem~\ref{BrandtFrolovthm} implies  that every Latin square  of order $n$ contains at most $n^5$ cuboctahedra and that a  latin square  is  isotopic to the Cayley table of a group if and only if it contains exactly $n^5$ cuboctahedra. We reformulate Theorem~\ref{BrandtFrolovthm} in the following form, which will be used later.

\begin{utv} \label{cubisotop}
Let $L$ and $L'$ be latin squares of order $n$ such that  for all triples of symbols $a,b,c$, $a \notin \{b,c\}$, the  $*$-entry in the submatrix
$$
\begin{array}{cc}
a & b \\ c & *
\end{array}
$$
in $L$ and $L'$ is  the same symbol $d$. Then  $L$ and $L'$ are principally isotopic  and they  are  isotopic to a Cayley table of a group. 
\end{utv}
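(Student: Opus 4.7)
The plan is to split the conclusion into two pieces. For the isotopy to a group Cayley table, I would observe that the hypothesis asserts in particular that within $L$ alone the entry $*$ in the given subsquare is uniquely determined by the triple $(a,b,c)$, which is exactly the Brandt--Frolov quadrangle condition; Theorem~\ref{BrandtFrolovthm} then immediately yields the required isotopy of $L$ (and likewise of $L'$). The substantive content of the proposition is therefore that $L$ and $L'$ are principally isotopic, i.e., one passes from $L'$ to $L$ by permuting rows and columns only, without relabeling symbols.

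For the principal isotopy I would exploit the shared ``quadrangle function'' $f(a,b,c)=d$ given by the hypothesis as a reconstruction tool. Fix a reference cell $(r_1,c_1)$ in $L$ with value $a_0=L(r_1,c_1)$, and pick any cell $(r_1',c_1')$ in $L'$ with the same value $a_0$. I would then define a column permutation $\tau$ and a row permutation $\sigma$ by aligning the reference row and column of $L'$ with those of $L$, via $L(r_1,\tau(c))=L'(r_1',c)$ and $L(\sigma(r),c_1)=L'(r,c_1')$. Both $\sigma$ and $\tau$ are well-defined permutations because every row and column of a latin square is a bijection onto the symbol set, and the choice of $a_0$ forces $\sigma(r_1')=r_1$ and $\tau(c_1')=c_1$.

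It then remains to check the equality $L(\sigma(r),\tau(c))=L'(r,c)$ at every cell $(r,c)$. The boundary cases $r=r_1'$ or $c=c_1'$ follow directly from the defining relations of $\sigma$ and $\tau$. For a generic cell I would invoke the quadrangle function $f$ on both sides: in $L$ the four positions $\{r_1,\sigma(r)\}\times\{c_1,\tau(c)\}$ form a subsquare whose three known corners are $a_0$, $L'(r_1',c)$, $L'(r,c_1')$, while in $L'$ the subsquare $\{r_1',r\}\times\{c_1',c\}$ has exactly the same three corner values. The hypothesis then forces both missing entries to equal $f(a_0,L'(r_1',c),L'(r,c_1'))$, which closes the argument. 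The main obstacle I anticipate is not a deep issue but a bookkeeping one: I must be careful that the quadrangle function is only invoked on triples $(a,b,c)$ with $a\neq b$ and $a\neq c$, which corresponds precisely to the restriction $r\neq r_1'$, $c\neq c_1'$ separating the generic case from the boundary cases handled by construction.
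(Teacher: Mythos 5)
Your proposal is correct and follows essentially the same route as the paper: apply Theorem~\ref{BrandtFrolovthm} to each square for the group isotopy, then align a reference row and column by row/column permutations alone and use the shared quadrangle completion to force all remaining entries to coincide. The paper states this alignment-and-reconstruction step in one terse sentence; your write-up simply makes explicit the permutations $\sigma,\tau$, the boundary cases, and the non-degeneracy check $a\neq b$, $a\neq c$ that the paper leaves implicit.
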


\begin{proof}
By Theorem~\ref{BrandtFrolovthm}, the latin squares $L$ and $L'$ are isotopic to the Cayley tables of groups. To show that they are principally isotopic, it is sufficient  to permute rows and columns of $L$ and $L'$  so that  they have the same first row and the same first column. The condition on the submatrices implies that the resulting  latin squares are identical. 
\end{proof}

\section{Multidimensional quadrangle condition} \label{maxnumbersec}

We start this section with a generalization of the quadrangle condition to the multidimensional case. For this purpose, we need additional definitions.

Consider  a $d$-dimensional latin hypercube $Q$ of order $n$.  A \textit{bundle} is a tuple of  indices $( \alpha^0, \alpha^1, \ldots, \alpha^d)$ of $d+1$ entries of $Q$ such that for each $i \in \{ 1, \ldots, d\}$  indices $\alpha^0$ and $\alpha^i$ belong to the line of direction $(i)$.   A $d$-dimensional  submatrix $S$ of order $2$ is \textit{spanned} by a bundle $( \alpha^0, \alpha^1, \ldots, \alpha^d)$ if it is composed by indices $\beta$    such that if for some $i \in \{ 1, \ldots, d\}$ we have $\beta_i \neq \alpha^0_i$, then $\beta_i = \alpha^i_i$. The filling  of a bundle or other  submatrix  $S$ in $Q$   is an indexed collection of symbols  $\{q_\beta \}_{\beta \in S}$.

\begin{theorem} \label{uniqrestor}
Let $Q$ be a $d$-dimensional latin hypercube of order $n$.  The   filling of a bundle uniquely determines the filling   of a spanned submatrix of order $2$ if and only if  for every direction $(i,j)$ there exists  a latin square $L^{i,j}$ isotopic to a Cayley table of a group  such that  every $2$-dimensional plane of $Q$ of direction $(i,j)$ is principally isotopic to $L^{i,j}$. 
\end{theorem}

\begin{proof}
The proof is by induction on $d$. For $d = 2$ the statement is equivalent to Theorem~\ref{BrandtFrolovthm}. 

Assume   that  the filling of  every  bundle  in $Q$  uniquely determines the filling   of the spanned submatrix  of order $2$. Let us fix some direction $(i,j)$ of $2$-dimensional planes and consider a  hyperplane $\Gamma$  that either contains $2$-dimensional planes of this direction or does not intersect them. 

Since  $\Gamma$ is a $(d-1)$-dimensional latin hypercube, the filling of  every  bundle  in  $\Gamma$  uniquely determines the filling   of the spanned $(d-1)$-dimensional submatrix.    By the inductive assumption,  all  $2$-dimensional planes of direction $(i,j)$ in $\Gamma$ are principally isotopic to a latin square $L$ which is isotopic to the Cayley table of some group. 

Similarly, consider a hyperplane $\Gamma'$ parallel to hyperplane $\Gamma$.  Then all  $2$-dimensional planes of direction $(i,j)$ in $\Gamma'$ are principally isotopic to a (possibly different) latin square $L'$ which is isotopic to the Cayley table of some group.  In all  $2$-dimensional planes of direction $(i,j)$ in $\Gamma$ and $\Gamma'$   submatrices of order $2$  are defined uniquely  by three other symbols. Consequently, from  Proposition~\ref{cubisotop} we have  that  latin squares $L$ and $L'$ are principally isotopic. 

Assume now that  for every direction $(i,j)$ there is  a latin square $L^{i,j}$ which is isotopic to the Cayley table of some group such that every $2$-dimensional plane of direction $(i,j)$   in $Q$ is principally isotopic to  $L^{i,j}$. Consider a $d$-dimensional submatrix $S$ of order $2$  spanned  by a bundle  $(\alpha^0, \alpha^1, \ldots, \alpha^d)$ and  filled by symbols $q^0,  q^1, \ldots, q^d $.  By the inductive assumption, we uniquely determine the symbols  in  all $d$ hyperplanes of the submatrix $S$ that contain the index $\alpha^0$.  Using these symbols, we uniquely determine the filling of hyperplanes of $S$ going through indices $\alpha^1, \ldots, \alpha^d$. Thus we completely reconstruct the filling of the submatrix $S$.
\end{proof}

Our next goal is to  state the quadrangle condition of Theorem~\ref{uniqrestor} in terms of the maximality of the number of cuboctahedra in  a latin hypercube. We start with  an upper bound on the number of $k$-dimensional cuboctahedra.

\begin{utv} \label{kdimupper}
Given $0 \leq k \leq d$, every  $d$-dimensional latin hypercube $Q$ of order $n$ has no more than ${d \choose k}  n^{2d-1}   (n-1)^{k} $  $k$-dimensional cuboctahedra and $n^{3d-1}$  cuboctahedra  in total. 
\end{utv}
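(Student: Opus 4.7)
The plan is a direct counting argument. I will count ordered cuboctahedra of a given dimension $k$ in two stages: first enumerate the possible ``$a$-submatrices'', then, for each, enumerate compatible ``$b$-submatrices''.

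Fix $k \in \{0, \ldots, d\}$. An $a$-side of a $k$-dimensional cuboctahedron is specified by (i) a choice of which $k$ of the $d$ directions are the extended ones, contributing $\binom{d}{k}$; (ii) the base corner $a_1 = (a_1^1, \ldots, a_1^d) \in \{0, \ldots, n-1\}^d$, contributing $n^d$; (iii) for each of the $k$ extended directions a value $a_2^i \neq a_1^i$, contributing $(n-1)^k$. So the number of ordered $a$-submatrices is $\binom{d}{k} n^d (n-1)^k$.

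Now fix such an $a$-submatrix and count compatible $b$-submatrices. The corner $b_1$ must satisfy $q_{b_1} = q_{a_1}$, and in each line there is exactly one index carrying a given symbol, so the number of admissible $b_1$ is $n^{d-1}$. I claim that once $a$ and $b_1$ are fixed, each extended coordinate $b_2^i$ is determined by at most one value: the pattern condition forces $q_{b_1^1, \ldots, b_2^i, \ldots, b_1^d} = q_{a_1^1, \ldots, a_2^i, \ldots, a_1^d}$, and the latin property of the line of direction $(i)$ through $b_1$ picks out at most one such $b_2^i$. Hence, for each choice of $a$-submatrix and of $b_1$, there is at most one $b$-submatrix completing the cuboctahedron. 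Multiplying gives the bound
\[
\binom{d}{k} n^d (n-1)^k \cdot n^{d-1} = \binom{d}{k} n^{2d-1} (n-1)^k.
\]

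For the total count, I would sum over $k$ and apply the binomial theorem:
\[
\sum_{k=0}^d \binom{d}{k} n^{2d-1} (n-1)^k = n^{2d-1}\bigl((n-1)+1\bigr)^d = n^{3d-1}.
\]

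There is no real obstacle; the only point that needs care is the claim that, given $a$ and $b_1$, the remaining coordinates $b_2^i$ are forced by the pattern together with the latin condition on lines. Everything else is routine counting. Note that the bound on each $k$-dimensional family is only asserted, not proved tight: some of the ``at most one'' choices may fail to produce a globally matching $b$-submatrix (the remaining $2^d - 1 - d$ entries still need to coincide), which is exactly the slack that the multidimensional quadrangle condition will later close up.
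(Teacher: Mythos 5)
Your proposal is correct and follows essentially the same argument as the paper: count the $a$-side in $\binom{d}{k} n^d (n-1)^k$ ways, count the admissible corners $b_1$ in $n^{d-1}$ ways, observe that the latin condition on lines forces each $b_2^i$, and sum over $k$ via the binomial theorem. Your explicit remark that each $b_2^i$ is determined by \emph{at most} one value (so the count is an upper bound, not an exact enumeration) is a slightly more careful phrasing of the step the paper states as ``uniquely defined,'' but it is the same proof.
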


\begin{proof}
Recall that a $k$-dimensional cuboctahedron is a collection of pairs $\{ (a_1^{i}, a_2^{i}), (b_1^{i}, b_2^{i}) : i = 1, \ldots, d \}$ such that there are exactly $k$ pairs $(a_1^{i}, a_2^{i})$ for which $a_1^{i} \neq a_2^{i} $.   We have  ${d \choose k}   n^{d-k}   (n(n-1))^{k}$ ways to choose  $d$ pairs $(a_1^{i}, a_2^{i})$ such that there are exactly   $k$ inequalities   $a_1^{i} \neq  a_2^{i}$.   The condition $q_{a_{1}^1, \ldots, a_{1}^d } = q_{b_{1}^1, \ldots, b_{1}^d } $ allows us to choose arbitrarily all except one  $b_{1}^1, \ldots, b_{1}^d$   that can be done in $n^{d -1}$ ways. The remaining values of $b_{2}^1, \ldots, b_{2}^d$ for a $k$-dimensional cuboctahedron are uniquely defined from equalities $q_{a_{j_1}^1, \ldots, a_{j_d}^d } = q_{b_{j_1}^1, \ldots, b_{j_d}^d } $ for all $j_1, \ldots, j_d \in \{ 1,2\}$. Thus the maximum number of $k$-dimensional cuboctahedra in a $d$-dimensional latin hypercube of order $n$ is ${d \choose k}  n^{2d-1}   (n-1)^{k}  $.

To find the total number of cuboctahedra, it is sufficient to sum these values from $0$ to $d$. 
\end{proof}

Now we are ready to prove the second form of the quadrangle condition.

\begin{theorem} \label{quadmaxthm}
A $d$-dimensional latin hypercube $Q$ of order $n$ has  $n^{3d-1}$ cuboctahedra if and only if  every $2$-dimensional plane of $Q$ of direction $(i,j)$ is principally isotopic to a latin square $L^{i,j}$ which is isotopic to a Cayley table of some group. 
\end{theorem}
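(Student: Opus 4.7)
By Proposition~\ref{kdimupper}, $n^{3d-1}$ is the maximum possible number of cuboctahedra in $Q$, so the task reduces to characterising the equality case. My plan is to recognise this equality case as exactly the bundle-reconstruction hypothesis of Theorem~\ref{uniqrestor} and then invoke that theorem to translate it into the plane condition.

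Recall from the counting argument behind Proposition~\ref{kdimupper} that each cuboctahedron is encoded by three pieces of data: the ordered pairs $(a_1^i, a_2^i)$ (i.e.\ the first submatrix), a ``matching corner'' $(b_1^1, \ldots, b_1^d)$ with $q_{b_1^1, \ldots, b_1^d} = q_{a_1^1, \ldots, a_1^d}$, and the values $b_2^i$ forced by the $d$ ``axis'' equalities
\[
q_{a_1^1, \ldots, a_2^i, \ldots, a_1^d} = q_{b_1^1, \ldots, b_2^i, \ldots, b_1^d}, \qquad i = 1, \ldots, d,
\]
together with the latin property. The bound $n^{3d-1}$ is therefore attained if and only if, for every choice of $a$'s and of a matching corner, the forced $b_2^i$'s also satisfy the remaining $2^d - d - 1$ cuboctahedron equalities.

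To connect this with bundles, I associate to such data the two bundles $A = (\alpha^0, \alpha^1, \ldots, \alpha^d)$ and $B = (\beta^0, \beta^1, \ldots, \beta^d)$, where $\alpha^0 = (a_1^1, \ldots, a_1^d)$, $\beta^0 = (b_1^1, \ldots, b_1^d)$, and $\alpha^i$, $\beta^i$ are obtained from $\alpha^0$, $\beta^0$ by replacing their $i$-th coordinates with $a_2^i$, $b_2^i$. The corner and axis equalities read exactly as ``$A$ and $B$ have the same filling'', while the remaining cuboctahedron equalities read ``the subcubes spanned by $A$ and $B$ have the same filling''. Since every bundle in $Q$ arises in this way, the bound $n^{3d-1}$ is attained if and only if any two bundles with identical filling span subcubes with identical filling, i.e.\ if and only if the filling of a bundle uniquely determines the filling of the spanned subcube of order $2$. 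Applying Theorem~\ref{uniqrestor} then yields the equivalence with the plane condition.

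The only delicate point is pinning down the cuboctahedron-to-bundles dictionary carefully enough that ``the bound is attained'' and ``the bundle condition holds'' really express the same property (checking both directions of the equivalence between the extra $2^d-d-1$ equalities and the coincidence of fillings on the spanned subcubes); once this bookkeeping is in place, Theorem~\ref{uniqrestor} does the rest.
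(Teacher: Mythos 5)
Your proposal is correct and follows essentially the same route as the paper: invoke Proposition~\ref{kdimupper} for the upper bound, identify the equality case with the property that bundles with identical fillings span identically filled subcubes of order $2$, and then apply Theorem~\ref{uniqrestor}. In fact, your careful bookkeeping of the cuboctahedron-to-bundles dictionary (the corner/axis equalities versus the remaining $2^d - d - 1$ equalities) spells out precisely the step that the paper's own proof states only in one terse sentence.
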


\begin{proof}
By Proposition~\ref{kdimupper}, $n^{3d-1}$  is the maximum number of cuboctahedra in a  $d$-dimensional latin hypercube of order $n$. 

A latin hypercube $Q$ has the maximum number of cuboctahedra if every $d$-dimensional submatrix $S$ of order $2$ spanned by a bundle  $(\alpha^0, \alpha^1, \ldots, \alpha^d)$  and filled by symbols $q^0, q^1, \ldots, q^d$     coincides with a submatrix $S'$  spanned by a  bundle  $(\beta^0, \beta^1, \ldots, \beta^d)$ and  filled by the same symbols. By Theorem~\ref{uniqrestor}, this condition is equivalent to the fact that  every $2$-dimensional plane of $Q$ of direction $(i,j)$ is principally isotopic to a latin square $L^{i,j}$ which is isotopic to a Cayley table of some group. 
\end{proof}

We conclude this section with some comments on  the obtained characterization of latin hypercubes with the maximum number of cuboctahedra. First, Theorem~\ref{quadmaxthm} implies that the Cayley tables of $d$-iterated groups $G$ are the most associative and have the maximal number of cuboctahedra because every $2$-dimensional plane of such a latin hypercube is principally isotopic to the group $G$. In Section~\ref{enumersec}, we find examples of latin hypercubes with the maximal number of cuboctahedra that are not isotopic to iterated groups.  

Next, in~\cite[Corollary 2]{KrotPot.redquasi} it was proved that  latin hypercubes of order $5$ and dimension $d \geq 4$ and  latin hypercubes of order $7$ and dimension $d \geq 3$ in which every $2$-dimensional plane is isotopic to a group correspond to reducible quasigroups. Taking into account the enumeration results for $3$-dimensional latin hypercubes of order $5$ (see Section~\ref{enumersec}), we deduce that every $d$-dimensional latin hypercube of order $5$ and $7$, $ d \geq 3$, containing the maximum number of cuboctahedra, is reducible.  Meanwhile, there exists a $4$-dimensional latin hypercube of order $4$ (hypercube \#23 from the computational data for~\cite{KayWan.census}) with the maximum number of cuboctahedra that is not reducible.  We conjecture that if $n$ is prime, then  every $d$-dimensional latin hypercube of order $n$ with the maximum number of cuboctahedra is reducible.

\section{Lower bounds on the number of cuboctahedra} \label{lowboundsec}

In this section, we prove several lower bounds on the number of cuboctahedra in latin squares and hypercubes.  We start with  cuboctahedra of small dimensions. 

\begin{utv} \label{0dimcub}
The number $c_0 (Q)$ of $0$-dimensional cuboctahedra in a  $d$-dimensional latin hypercube $Q$ of order $n$ is $c_0(Q) = n^{2d - 1}$.
\end{utv}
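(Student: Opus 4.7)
The plan is to unwind the definition of a $0$-dimensional cuboctahedron and reduce the count to a standard frequency computation. By definition, a $0$-dimensional cuboctahedron has every pair $(a_1^i,a_2^i)$ and $(b_1^i,b_2^i)$ satisfying $a_1^i=a_2^i$ and $b_1^i=b_2^i$, so both submatrices collapse to single entries $q_\alpha$ and $q_\beta$ with $\alpha=(a_1^1,\ldots,a_1^d)$, $\beta=(b_1^1,\ldots,b_1^d)$. The condition in the definition of a cuboctahedron then reduces to the single equation $q_\alpha = q_\beta$. Hence $c_0(Q)$ equals the number of ordered index pairs $(\alpha,\beta)$ with $q_\alpha = q_\beta$.

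Next I would invoke the elementary fact that in a $d$-dimensional latin hypercube of order $n$, every symbol occurs exactly $n^{d-1}$ times. This is immediate from the line condition: fixing any $d-1$ coordinates yields a line that contains each symbol exactly once, so the number of occurrences of a fixed symbol equals the number of ways to choose the remaining $d-1$ coordinates, namely $n^{d-1}$.

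Finally I would sum the contribution of each symbol. Writing $N_s$ for the number of cells containing symbol $s$, the count of same-symbol ordered pairs is
\begin{equation*}
c_0(Q) = \sum_{s=0}^{n-1} N_s^2 = n \cdot (n^{d-1})^2 = n^{2d-1},
\end{equation*}
which matches the upper bound ${d \choose 0} n^{2d-1}(n-1)^0 = n^{2d-1}$ from Proposition~\ref{kdimupper}. There is no real obstacle here; the only point that deserves care is the correct interpretation of the $k=0$ case of the cuboctahedron definition, after which the computation is a one-line frequency sum.
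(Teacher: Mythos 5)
Your proof is correct and follows essentially the same route as the paper: interpret a $0$-dimensional cuboctahedron as an ordered pair of entries holding the same symbol, note each symbol occurs $n^{d-1}$ times, and sum $n\cdot(n^{d-1})^2=n^{2d-1}$. You simply spell out the frequency fact and the degenerate-pair interpretation that the paper's two-line proof leaves implicit.
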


\begin{proof}  
A $0$-dimensional cuboctahedron is an ordered pair of entries with identical symbols in a latin hypercube.  For each of $n$ symbols, there are $n^{2d-2}$ ordered pairs of entries of $Q$ containing a given symbol.
\end{proof}

\begin{utv} \label{1dimcub}
The number $c_1 (Q)$ of $1$-dimensional cuboctahedra in a  $d$-dimensional latin hypercube $Q$ of order $n$ is $c_1(Q) =  d n^{2d-1} (n-1)$.
\end{utv}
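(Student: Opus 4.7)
The plan is to identify a $1$-dimensional cuboctahedron with an ordered triple consisting of a common direction, two parallel lines in that direction, and an ordered pair of distinct symbols, and then multiply.

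First, I would unpack the definition. In a $1$-dimensional cuboctahedron, exactly one $a$-pair, in some direction $i$, is non-degenerate ($a_1^i \neq a_2^i$), and $a_1^\ell = a_2^\ell$ for every $\ell \neq i$. The key preliminary step is to show that the $b$-side inherits the same structure with the same distinguished direction $i$. Indeed, for any $\ell \neq i$, the entry $q_{a_{j_1}^1,\ldots,a_{j_d}^d}$ does not depend on $j_\ell$, hence neither does $q_{b_{j_1}^1,\ldots,b_{j_d}^d}$; fixing the other $j$'s and flipping $j_\ell$ produces two entries of $Q$ in a common line of direction $\ell$ with equal values, so the latin property forces $b_1^\ell = b_2^\ell$. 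A symmetric argument in direction $i$ yields $b_1^i \neq b_2^i$.

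Next, with this structure in hand, I would set up a bijection between $1$-dimensional cuboctahedra and tuples $(i, L^a, L^b, s_1, s_2)$, where $i \in \{1,\ldots,d\}$ is the common non-degenerate direction, $L^a$ and $L^b$ are lines of direction $i$ (each specified by fixing the remaining $d-1$ coordinates), and $(s_1,s_2)$ is an ordered pair of distinct symbols. Given such a tuple, the pair $(a_1^i, a_2^i)$ is recovered as the unique indices in $L^a$ carrying symbols $s_1$ and $s_2$ (which exist and are unique because $Q$ is latin), and the degenerate coordinates $a_1^\ell = a_2^\ell$ for $\ell \neq i$ are read off the address of $L^a$; the $b$-side is reconstructed analogously from $L^b$. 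Multiplying the counts of parameters gives
$$c_1(Q) \;=\; d \cdot n^{d-1} \cdot n^{d-1} \cdot n(n-1) \;=\; d\, n^{2d-1}(n-1).$$

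The only substantive obstacle is the first paragraph---verifying that the positions of degenerate coordinates on the $a$- and $b$-sides coincide. Everything else is bookkeeping; in particular, it is worth emphasizing that the definition of a cuboctahedron does not require $L^a$ and $L^b$ to coincide, which is exactly what gives two independent factors of $n^{d-1}$ rather than one.
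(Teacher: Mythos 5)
Your proposal is correct and takes essentially the same route as the paper, which counts ordered pairs of identically filled dominos ($d\,n^d(n-1)$ choices for the first domino, $n^{d-1}$ for the second, since it is determined by its first entry) --- the same product you obtain via the parameters (direction, line $L^a$, line $L^b$, ordered symbol pair). Your first paragraph, verifying that the latin property forces the non-degenerate direction of the $b$-side to coincide with that of the $a$-side, is a worthwhile detail that the paper leaves implicit in its definition of the dimension of a cuboctahedron.
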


\begin{proof}
Note that a $1$-dimensional cuboctahedron is an ordered pair of identically filled ordered dominos (pairs of entries from one line). The number of ways to choose the first domino is   $d n^{d} (n-1)$: there are $d$ ways to choose the direction of the domino, $n^{d}$ ways to choose the first entry, and $(n-1)$ ways to choose the second entry in the line of this direction. The number of ways to choose the second domino is $n^{d-1}$ because it is uniquely defined by its first entry.  
\end{proof}

Next, we find the number of  degenerate cuboctahedra.  A cuboctahedron  $\{ (a_1^{i}, a_2^{i}), (b_1^{i}, b_2^{i}) : i = 1, \ldots, d \}$ in a $d$-dimensional latin hypercube  $Q$ is \textit{degenerate} if $a_j^i = b_j^i$ for every $ i \in  \{1, \ldots, d\}$ and $j \in \{ 1,2\}$. In other words, a degenerate cuboctahedron in $Q$ is a submatrix, taken twice. 

\begin{utv} \label{degenercub}
The number $\delta_k(Q)$ of degenerate $k$-dimensional cuboctahedra in a $d$-dimensional latin hypercube $Q$ of order $n$ is $\delta_k(Q) =  {d \choose k} n^{d} (n-1)^k$.
\end{utv}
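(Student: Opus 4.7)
The plan is a direct counting argument, since a degenerate cuboctahedron is determined entirely by its first collection of pairs $(a_1^i, a_2^i)_{i=1}^d$: once we set $b_j^i = a_j^i$, the cuboctahedron conditions $q_{a_{j_1}^1,\ldots,a_{j_d}^d} = q_{b_{j_1}^1,\ldots,b_{j_d}^d}$ hold trivially. So counting degenerate $k$-dimensional cuboctahedra reduces to counting the number of ordered tuples of pairs $(a_1^i, a_2^i)$ with exactly $k$ indices $i$ satisfying $a_1^i \neq a_2^i$.

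First I would observe that the cuboctahedron axioms impose no further restrictions in the degenerate case, so every such tuple of pairs genuinely yields a degenerate cuboctahedron (and distinct tuples yield distinct cuboctahedra, since the data of a cuboctahedron includes these pairs). Then I would split the counting into three independent choices: (i) choose the subset $I \subseteq \{1,\ldots,d\}$ of size $k$ of coordinates on which $a_1^i \neq a_2^i$, contributing $\binom{d}{k}$; (ii) for each $i \in I$, choose an ordered pair of distinct elements of $\{0,\ldots,n-1\}$, contributing $n(n-1)$ per coordinate; (iii) for each $i \notin I$, choose the common value $a_1^i = a_2^i$, contributing $n$ per coordinate. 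Multiplying gives
\[
\delta_k(Q) = \binom{d}{k}\,\bigl(n(n-1)\bigr)^{k}\, n^{d-k} = \binom{d}{k}\, n^d (n-1)^k,
\]
as claimed.

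There is essentially no analytical obstacle here: the only point that requires a moment of care is verifying that the count matches exactly $k$ (rather than at most $k$) coordinates with $a_1^i \neq a_2^i$, which is ensured by separately fixing the set $I$ and then forcing equality off $I$. Because the argument does not use the latin property of $Q$ at all, the resulting formula is independent of the particular hypercube, matching the statement that $\delta_k(Q)$ depends only on $n$, $d$ and $k$.
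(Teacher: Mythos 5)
Your proposal is correct and follows essentially the same route as the paper: both reduce the count to enumerating order-$2$ submatrices (equivalently, the tuples of ordered pairs $(a_1^i,a_2^i)$ with exactly $k$ coordinates where the pair is non-constant), factored as direction choice $\binom{d}{k}$, corner/common values $n^{d}$, and offsets $(n-1)^k$. The only cosmetic difference is that the paper picks the corner entry in $n^d$ ways and then the $k$ offsets, while you group the $n(n-1)$ choices per varying coordinate; the counts coincide, and both arguments correctly avoid using the latin property.
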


\begin{proof}
By definition, the number  of degenerate $k$-dimensional cuboctahedra is equal to the number of $k$-dimensional submatrices of order $2$ in $Q$.  There are ${d \choose k}$ ways  to choose the direction of the submatrix, $n^{d}$ ways to choose the first entry, and $(n-1)^k$ ways to choose other entries  in selected $k$  lines. So there are  ${d \choose k} n^{d} (n-1)^k$ degenerate $k$-dimensional cuboctahedra in total. 
\end{proof}

The above statements imply that the total number of cuboctahedra in a given latin hypercube is determined by the number of non-degenerate cuboctahedra of dimensions $k \geq 2$. For latin squares, we give a better bound on the number of non-degenerate $2$-dimensional cuboctahedra (and hence on the total number of cuboctahedra), while for higher-dimensional latin hypercubes we give, for the sake of simplicity, only the essential parts of the lower bound.

\begin{theorem} \label{lowboundsquare}
Every latin square  $L$ of order $n$ contains at least $3 n^4 + n^3 - 9 n^2 + 6n$ cuboctahedra. 
\end{theorem}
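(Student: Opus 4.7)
The plan is to split the cuboctahedra of $L$ by dimension (here either $0$, $1$, or $2$) and invoke Propositions~\ref{0dimcub}--\ref{1dimcub} for the low-dimensional pieces, $c_0(L) = n^3$ and $c_1(L) = 2n^3(n-1)$. Since $c_0(L) + c_1(L) = 2n^4 - n^3$, it remains to establish
\[
c_2(L) \;\ge\; n^4 + 2n^3 - 9n^2 + 6n.
\]

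To estimate $c_2(L)$, I would introduce, for each admissible filling $(a,b,c,d)$ of an ordered $2\times 2$ submatrix (i.e., $a\ne b$, $a\ne c$, $d\ne b$, $d\ne c$), the count $N(a,b,c,d)$ of ordered $2\times 2$ submatrices of $L$ realising that pattern; then, by the definition of a $2$-dimensional cuboctahedron, $c_2(L) = \sum N(a,b,c,d)^2$. The key identity comes from fixing $(a,b,c)$ with $a\ne b$ and $a\ne c$ and summing over $d$: each of the $n$ cells of $L$ carrying the symbol $a$ extends uniquely to an ordered $2\times 2$ submatrix by locating $b$ in the same row and $c$ in the same column, with the fourth entry $d$ then forced by the latin square. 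Therefore $\sum_d N(a,b,c,d) = n$, distributed over $k = n-1$ admissible values of $d$ when $b = c$, and over $k = n-2$ admissible values when $b \ne c$.

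The heart of the argument is an integer-minimisation estimate: among non-negative integer tuples $(N_1,\dots,N_k)$ with $\sum N_i = n$, writing $n = kq + r$ with $0 \le r < k$ yields $\sum N_i^2 \ge kq^2 + (2q+1)r$. Applied with $k = n-1$ (so $q=1$, $r=1$) this gives $\sum_d N_d^2 \ge n+2$; applied with $k = n-2$ (so $q=1$, $r=2$, valid for $n \ge 4$) it gives $n+4$. The small cases $n = 2, 3$ are immediate or only improve the bound. Summing over the $n(n-1)$ triples $(a,b,c)$ with $b = c$ and the $n(n-1)(n-2)$ triples with $b \ne c$ produces
\[
c_2(L) \;\ge\; n(n-1)(n+2) + n(n-1)(n-2)(n+4) \;=\; n(n-1)(n^2+3n-6),
\]
which expands to $n^4 + 2n^3 - 9n^2 + 6n$, completing the proof after adding the contributions of $c_0$ and $c_1$.

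The main subtlety I expect is that the integrality of the $N_d$ is essential: the purely analytic Cauchy--Schwarz bound $\sum_d N_d^2 \ge n^2/k$ summed over all triples gives only $c_2(L) \ge n^4$, losing the crucial $2n^3$ term. Exploiting that the $N_d$ are non-negative integers improves the per-triple estimate by roughly $1$ in the $b = c$ case and by roughly $2$ in the $b \ne c$ case, which exactly recovers the missing $2n^3$ after summation over the $\Theta(n^3)$ triples; the rest is routine polynomial arithmetic.
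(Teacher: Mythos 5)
Your proof is correct and takes essentially the same route as the paper: the same split by cuboctahedron dimension, the same key identity $\sum_d N(a,b,c,d)=n$ for each triple $(a,b,c)$ with the case distinction $b=c$ versus $b\neq c$, and the same pigeonhole/balancing estimate, yielding the identical per-triple bounds $n+2$ and $n+4$. The only cosmetic difference is bookkeeping: you bound $\sum_d N_d^2$ in one step via the balanced-integer-partition lemma, while the paper counts the degenerate pairs separately (its Proposition~\ref{degenercub}) and bounds the non-degenerate pairs $\sum_d N_d(N_d-1)\geq 4$ (resp.\ $2$) by explicit pigeonhole cases; the two computations coincide term by term.
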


\begin{proof}
By Propositions~\ref{0dimcub} and~\ref{1dimcub}, there are $2n^4 - n^3$ cuboctahedra of dimensions $0$ and $1$  in every latin square of order $n$.  By Proposition~\ref{degenercub}, we have in addition $n^4 - 2n^3 + n^2$ degenerate cuboctahedra of dimension $2$. 

Let us estimate the number of non-degenerate $2$-dimensional cuboctahedra. 
Given symbols $x,y,z \in \{ 0, \ldots, n-1\}$,  let  $t^u_{x,y,z}$ denote the number  of $2 \times 2$ submatrices $S^u_{x,y,z}$ of the form
$$
\begin{array}{cc}
 x & y \\ 
 z & u
\end{array}
$$
 in the latin square $L$.   Since every pair of submatrices  $S^u_{x,y,z}$  in $L$ is a non-degenerate $2$-dimensional cuboctahedron, the total number of non-degenerate $2$-dimensional cuboctahedra is  equal to $\sum\limits_{x,y,z,u} t^u_{x,y,z} (t^u_{x,y,z} - 1) $. 
 From the definition of a latin square,  for all $x,y,z$, where  $x \neq y$, $x \neq z$, we have  that $\sum\limits_{u=0}^{n-1} t^u_{x,y,z} = n $ and  $ t^y_{x,y,z} =  t^z_{x,y,z} = 0$.
 
Consider the case $y \neq z$. By the pigeonhole principle, either  there exists $u$ such that $ t^u_{x,y,z}  \geq 3 $ or there exist two different $u$ and $u'$ such that $ t^u_{x,y,z}  \geq 2$ and $ t^{u'}_{x,y,z}  \geq 2$. In the first case, three submatrices $S^u_{x,y,z}$  form $6$  non-degenerate $2$-dimensional cuboctahedra, and in the second case two pairs of $S^u_{x,y,z}$ and $S^{u'}_{x,y,z}$ give  $4$  non-degenerate $2$-dimensional cuboctahedra. Assuming that for all  $x,y,z$, where $x \neq y$, $x \neq z$, $y \neq z$, the second  (worst) case is realized, we obtain at least $4 n (n-1) (n-2) = 4 n^3 - 12n^2 + 8n$  non-degenerate $2$-dimensional  cuboctahedra  of this type in total. 
 
Now assume that $y = z$. Again, by the pigeonhole principle, we see that  there exists $u$ such that $ t^u_{x,y,y}  \geq 2 $.  Since each intercalate $S^x_{x,y,y}$ is counted twice, the number $t^x_{x,y,y}$ is always even.  So in this case the minimum number of cuboctahedra is achieved when there are exactly two submatrices  $S^x_{x,y,y}$  for each $x,y$, where $x \neq y$, and all other submatrices  $S^u_{x,y,y}$ appear once. A pair of submatrices  $S^x_{x,y,y}$   gives us $2$ non-degenerate $2$-dimensional cuboctahedra, and so in total we have $2 n (n-1) = 2n^2 - 2n$ additional cuboctahedra. 
 
 Summing up, every latin square $L$ of order $n$ has at least
 $$(2n^4 - n^3) + (n^4 - 2n^3 + n^2) + (4 n^3 - 12n^2 + 8n) + (2n^2 - 2n) = 3n^4 + n^3 - 9 n^2 + 6n$$
 cuboctahedra.

\end{proof}

Using a similar method, we estimate from below the number of non-degenerate $k$-dimensional cuboctahedra in latin hypercubes.

\begin{utv} \label{kdimnondeg}
Let  $k$, $n$, and $d$ be integers,  $n \geq 2$,  $d \geq  3$,  and $2 \leq k \leq \min \{d ,n\}$.
Then  every $d$-dimensional   latin hypercube  $Q$ of order $n$ contains at least $ {d \choose k} n (n-k)^{2^k - 1} r_k(r_k-1)$ non-degenerate $k$-dimensional cuboctahedra, where $r_k = \lfloor  n^{d-1} (n-1)^{k + 1 - 2^k}  \rfloor$.
\end{utv}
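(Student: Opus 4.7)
The plan is to extend the pigeon-hole/double-counting argument in the proof of Theorem~\ref{lowboundsquare} to higher dimensions. Fixing a direction $(i_1,\ldots,i_k)$ contributes the factor ${d \choose k}$, so one can work within a single direction and lower bound the number of ordered pairs of distinct sub-$2$-cubes of this direction in $Q$ sharing the same full filling.

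Every sub-$2$-cube of the chosen direction is determined by a spanning tuple of $k+1$ entries: a base $\alpha^0$ and neighbors $\alpha^1,\ldots,\alpha^k$ lying on the lines through $\alpha^0$ in directions $i_1,\ldots,i_k$. For each fixed tuple of symbols $(q^0,q^1,\ldots,q^k)$ with $q^l\neq q^0$, the latin condition forces exactly $n^{d-1}$ sub-$2$-cubes to have these symbols at the spanning positions: $\alpha^0$ ranges over the $n^{d-1}$ cells carrying value $q^0$, and each $\alpha^l$ is then uniquely determined on its line. The full fillings of these $n^{d-1}$ sub-$2$-cubes can differ only at the $2^k-k-1$ remaining corners; because each such corner differs (via the latin property) from each of its $k$ sub-$2$-cube neighbors, at most $(n-1)^{2^k-k-1}$ distinct full fillings are possible. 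A pigeon-hole then produces a full filling shared by at least $r_k=\lfloor n^{d-1}(n-1)^{k+1-2^k}\rfloor$ sub-$2$-cubes, contributing $r_k(r_k-1)$ ordered non-degenerate cuboctahedra per choice of spanning symbols.

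To obtain the factor $n(n-d)^{2^k-1}$, we count how many spanning-symbol choices (or, equivalently, full-filling classes with $t_F \geq r_k$) independently supply $r_k(r_k-1)$ cuboctahedra. Here I would use that each cell of $Q$ lies on $d$ lines and that each line contains $n-1$ other symbols: building up the filling of a reference sub-$2$-cube corner by corner, the base symbol admits $n$ options, and each of the remaining $2^k-1$ corners admits at least $n-d$ symbols compatible with the latin structure along the $d$ ambient lines through that corner. Multiplying ${d \choose k}$ directions by $n(n-d)^{2^k-1}$ compatible spanning fillings by $r_k(r_k-1)$ cuboctahedra yields the claim.

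The hardest step will be the final count: verifying that the sequential $n-d$ symbol choices actually produce $(n-d)^{2^k-1}$ distinct, pigeon-hole--active spanning fillings without collapse or double-counting, and justifying that the per-bundle contribution $r_k(r_k-1)$ can be aggregated over this many bundles without overlap. The remaining ingredients are a straightforward higher-dimensional bookkeeping of the $(x,y,z,u)$ case analysis used for Theorem~\ref{lowboundsquare}.
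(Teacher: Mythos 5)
Your setup is sound and close in spirit to the paper's: fix a direction, note that each tuple of spanning symbols $(q^0,q^1,\ldots,q^k)$ is realized by exactly $n^{d-1}$ rooted subcubes of order $2$, that these subcubes can differ only in the $2^k-k-1$ non-spanning corners (hence at most $(n-1)^{2^k-k-1}$ full fillings per spanning class), and that a pigeonhole yields one filling class of size at least $n^{d-1}(n-1)^{k+1-2^k}\geq r_k$. The genuine gap is in the final aggregation. Your pigeonhole guarantees only \emph{one} popular filling per spanning tuple, and since a full filling determines its spanning symbols, distinct spanning tuples give distinct filling classes; so what your argument actually proves is a total of ${d \choose k}\, n (n-1)^k\, r_k(r_k-1)$ cuboctahedra per hypercube, not the claimed ${d \choose k}\, n (n-d)^{2^k-1}\, r_k(r_k-1)$. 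Because $2^k-1>k$, for $n$ large relative to $d$ we have $(n-d)^{2^k-1}\gg (n-1)^k$ (e.g. $d=3$, $k=2$, $n=10$ gives $343$ versus $81$), so your count falls short of the statement. The attempted patch, multiplying by ``$n(n-d)^{2^k-1}$ compatible spanning fillings,'' conflates two different objects: a spanning tuple consists of only $k+1$ symbols, so there are at most $n(n-1)^k$ spanning tuples, whereas $(n-d)^{2^k-1}$ counts fillings of all $2^k$ corners; moreover nothing guarantees that $(n-d)^{2^k-1}$ of these full-filling classes each contain at least $r_k$ subcubes --- only the single most popular class within each spanning class is guaranteed to. Your closing remark about the ``hardest step'' flags exactly the point at which the argument breaks.

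The repair is to keep the whole distribution of fillings instead of extracting a single popular one. The paper conditions only on the direction and the corner symbol $s$: the $T=n^{d-1}(n-1)^k$ rooted subcubes with corner symbol $s$ fall into at most $(n-1)^{2^k-1}$ filling patterns, while at least $(n-d)^{2^k-1}$ patterns are available, so in the worst (most even) distribution every pattern occurs at least $r_k=\lfloor T/(n-1)^{2^k-1}\rfloor$ times, and summing $t_F(t_F-1)$ over at least $(n-d)^{2^k-1}$ patterns gives $(n-d)^{2^k-1}r_k(r_k-1)$ per direction and symbol; multiplying by $n$ symbols and ${d \choose k}$ directions yields the claim. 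Equivalently, you could stay with your finer decomposition but apply the same convexity bound to the full sum $\sum_F t_F(t_F-1)$ over all at most $(n-1)^{2^k-k-1}$ fillings inside each spanning class and then sum over the $n(n-1)^k$ classes; since $n(n-1)^k\cdot(n-1)^{2^k-k-1}=n(n-1)^{2^k-1}$, this recovers the paper's bound. In short: the counting framework is right, but replacing the ``best class per spanning tuple'' step by a convexity estimate over all filling classes is essential to reach the stated constant.
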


\begin{proof}
Fix a symbol $s \in \{ 0,  \ldots, n-1\}$ and one of  ${d \choose k}$ directions of $k$-dimensional planes in $Q$. Let us estimate the number of non-degenerate $k$-dimensional cuboctahedra  of a chosen direction with a corner symbol $s$. 

By the definition, each cuboctahedron is a pair of $k$-dimensional submatrices of order $2$ in $Q$ with the same pattern of symbols.  So if there are $t$ different $k$-dimensional submatrices of order $2$ with the fixed corner and a giving filling by symbols, then they contribute $t(t-1)$   non-degenerate $k$-dimensional cuboctahedra. 

The number of $k$-dimensional  submatrices of order $2$ of a given direction  and  having  a fixed entry as a corner is $(n-1)^k$, and the total number of such submatrices with corner symbol $s$ is $ n^{d-1} (n-1)^k$. 

On the other hand, the number of possible  symbol patterns of $k$-dimensional matrices of order $2$  in a latin hypercube of order $n$ with a fixed corner symbol $s$   does not exceed $(n-1)^{2^k -1}$ and is not less than $(n-k)^{2^k-1}$.  The minimum of the number of cuboctahedra is achieved when each possible symbol pattern of $k$-dimensional submatrices of order $2$ appears an equal number of times as possible, i.e., when it appears at least  $\lfloor  n^{d-1} (n-1)^{k + 1 - 2^k}  \rfloor = r_k$ times.   So there are at least $(n-k)^{2^k-1} r_k(r_k-1)$ non-degenerate  $k$-dimensional  cuboctahedra of a given direction and with the corner  symbol $s$, and $ {d \choose k} n (n-k)^{2^k-1} r_k(r_k-1)$  cuboctahedra in total.

\end{proof}

Note that for small $n$ the bounds $(n-1)^{2^k-1}$ and $(n - d)^{2^k-1}$ on the number of possible symbol patterns  in $k$-dimensional matrices of order $2$ 
 can be quite far from the exact number. So  in Section~\ref{enumersec}  we estimate them more precisely  for latin hypercubes of small order   and dimension.
 
 At last, we are ready to state the lower bound on the number of cuboctahedra in latin hypercubes.  

\begin{theorem} \label{cublow}
Let $n$ and $d$ be integers,  $d \geq  3$ and $n \geq d$. For integer $2 \leq k \leq d$, define $r_k = \lfloor n^{d-1} (n-1)^{k + 1 - 2^k}  \rfloor$.  Then every $d$-dimensional   latin hypercube  $Q$ of order $n$ contains at least 
$$(d+1) n^{2d} - (d-1) ( n^{2d-1} - n^{d} )  - d n^{d+1}   + n\sum\limits_{k = 2}^d   {d \choose k}  (n-k)^{2^k-1} r_k(r_k-1) $$
  cuboctahedra. 
\end{theorem}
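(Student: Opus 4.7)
The plan is to assemble the bound by decomposing the total cuboctahedron count into contributions by dimension and then applying the four preceding propositions. Write
\[
c(Q) \;=\; c_0(Q) + c_1(Q) + \sum_{k=2}^{d}\bigl(\delta_k(Q) + \nu_k(Q)\bigr),
\]
where $\nu_k(Q)$ is the number of non-degenerate $k$-dimensional cuboctahedra. The first observation to record is that the counts in Propositions~\ref{0dimcub} and~\ref{1dimcub} already enumerate the degenerate $0$- and $1$-dimensional cuboctahedra (the diagonal pair of a single entry, and a domino paired with itself), so one must add $\delta_k(Q)$ only for $k\geq 2$ to avoid double-counting. This is the one point that requires care; the remainder of the argument is purely bookkeeping.

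Next I would substitute the closed-form values. Propositions~\ref{0dimcub} and~\ref{1dimcub} give
\[
c_0(Q)+c_1(Q) \;=\; n^{2d-1} + dn^{2d-1}(n-1) \;=\; dn^{2d}-(d-1)n^{2d-1}.
\]
For the degenerate contribution in dimensions $k\geq 2$, Proposition~\ref{degenercub} yields
\[
\sum_{k=2}^{d}\delta_k(Q) \;=\; n^d\sum_{k=2}^{d}\binom{d}{k}(n-1)^k,
\]
and the key simplification is the binomial identity $\sum_{k=0}^{d}\binom{d}{k}(n-1)^k=n^d$, which isolates the two missing terms and gives
\[
\sum_{k=2}^{d}\delta_k(Q) \;=\; n^d\bigl(n^d - 1 - d(n-1)\bigr).
\]
Expanding and collecting powers of $n$ converts this into a polynomial in $n$ involving $n^{2d}$, $n^{d+1}$, and $n^d$.

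Finally I would add to this the non-degenerate bound of Proposition~\ref{kdimnondeg}, namely
\[
\sum_{k=2}^{d}\nu_k(Q)\;\geq\;n\sum_{k=2}^{d}\binom{d}{k}(n-d)^{2^k-1}r_k(r_k-1),
\]
with the common factor $n$ pulled outside, and combine it with the polynomial obtained in the previous step. Gathering the $n^{2d}$, $n^{2d-1}$, $n^{d+1}$, and $n^d$ contributions from $c_0+c_1$ and $\sum_{k\geq 2}\delta_k$ produces exactly the closed-form polynomial stated in the theorem, plus the non-degenerate tail. I do not expect a genuine obstacle here: the entire argument is arithmetic once the four propositions are in hand, and the only place to slip up is the double-counting issue flagged above and the binomial simplification of $\sum_{k\geq 2}\binom{d}{k}(n-1)^k$.
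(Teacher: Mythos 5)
Your proposal is correct and takes essentially the same route as the paper: Propositions~\ref{0dimcub} and~\ref{1dimcub} for dimensions $0$ and $1$, Proposition~\ref{degenercub} summed over $k\geq 2$ via the binomial identity $\sum_{k}\binom{d}{k}(n-1)^k=n^d$, and Proposition~\ref{kdimnondeg} for the non-degenerate tail, with the same observation that degenerate counts are added only for $k\geq 2$. One minor caveat: the exact combination yields $+(d-1)n^d$ rather than $-(d-1)n^d$, i.e., a bound slightly stronger than the stated polynomial, so your claim that the bookkeeping produces \emph{exactly} the stated expression is off by $2(d-1)n^d$ in your favor, and the theorem follows a fortiori.
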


\begin{proof}
By Propositions~\ref{0dimcub} and~\ref{1dimcub}, there are $dn^{2d} - (d-1)n^{2d-1} $ cuboctahedra of dimensions $0$ and $1$ in a $d$-dimensional  latin hypercube of order $n$.  
By Proposition~\ref{degenercub}, we have in addition $\sum\limits_{k = 2}^d  {d \choose k} n^d (n-1)^k = n^d (n^d - d (n-1) - 1)$ degenerate cuboctahedra of dimension $k$. Finally,  for each $k \geq 2$, Proposition~\ref{kdimnondeg}  gives $ {d \choose k}  n (n-k)^{2^k -1} r_k(r_k - 1)$ $k$-dimensional non-degenerate cuboctahedra. 
\end{proof}

\begin{sled} 
For $d \geq 3$, every $d$-dimensional   latin hypercube  $Q$ of order $n$ contains at least 
 $$\left(\frac{d(d-1)}{2} + d +1 \right) n^{2d} + o(n^{2d})$$ 
 cuboctahedra as $n \rightarrow \infty$.
\end{sled}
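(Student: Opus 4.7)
The plan is to simply extract the leading-order term from the explicit lower bound of Theorem~\ref{cublow}, since the corollary is clearly an asymptotic reformulation of it. First I would note that the ``low-dimensional and degenerate'' part of the bound, namely $(d+1)n^{2d} - (d-1)(n^{2d-1}+n^d) - d n^{d+1}$, already contributes $(d+1)n^{2d} + O(n^{2d-1})$, so all but the $n^{2d}$ term is absorbed into $o(n^{2d})$. The remaining work is to analyze the sum $n \sum_{k=2}^d \binom{d}{k} (n-d)^{2^k-1} r_k(r_k-1)$ and show that only the $k=2$ summand contributes at order $n^{2d}$, with asymptotic value $\binom{d}{2} n^{2d} = \frac{d(d-1)}{2}\,n^{2d}$.

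The key step is to track exponents of $n$ carefully. Writing $r_k = \lfloor n^{d-1}(n-1)^{k+1-2^k}\rfloor$ and noting that for $k\ge 2$ the exponent $k+1-2^k$ is negative, we get $r_k = n^{d+k-2^k}\bigl(1+O(1/n)\bigr)$. Combined with $(n-d)^{2^k-1} = n^{2^k-1}\bigl(1+O(1/n)\bigr)$, the $k$-th summand of $n\sum_{k=2}^d$ is asymptotically
\begin{equation*}
\binom{d}{k} n^{1+(2^k-1)+2(d+k-2^k)} \bigl(1+O(1/n)\bigr) = \binom{d}{k} n^{2d+2k-2^k}\bigl(1+O(1/n)\bigr).
\end{equation*}
One then checks that the exponent $2k - 2^k$ equals $0$ precisely when $k=2$ and is at most $-2$ for $k\ge 3$. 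Hence the $k=2$ summand equals $\binom{d}{2}n^{2d} + O(n^{2d-1})$ and all summands with $k\ge 3$ are $O(n^{2d-2})$.

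Putting the pieces together, the bound of Theorem~\ref{cublow} equals
\begin{equation*}
(d+1)n^{2d} + \frac{d(d-1)}{2} n^{2d} + o(n^{2d}) = \left(\frac{d(d-1)}{2} + d + 1\right) n^{2d} + o(n^{2d}),
\end{equation*}
which is the claimed estimate. There is no real obstacle here; the only point requiring a moment of care is the verification that $2k - 2^k \le -2$ for every $k\ge 3$, which is immediate by monotonicity since equality to $-2$ holds at $k=3$ and $2^k$ grows strictly faster than $2k$.
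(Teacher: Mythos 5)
Your proof is correct and takes essentially the same approach as the paper: both read off the asymptotics of the bound in Theorem~\ref{cublow}, with the $k=2$ summand (where $r_2=\lfloor n^{d-1}/(n-1)\rfloor \sim n^{d-2}$) supplying the $\frac{d(d-1)}{2}n^{2d}$ term and everything else absorbed into $o(n^{2d})$. The only difference is cosmetic: you bound the $k\ge 3$ summands explicitly via $2k-2^k\le -2$ (strictly speaking, when $d+k-2^k<0$ the floor gives $r_k=0$, so your two-sided asymptotic for $r_k$ should be read there only as an upper bound), whereas the paper silently discards these summands, which is legitimate since they are nonnegative and the statement is a lower bound.
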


\begin{proof}
In Theorem~\ref{cublow}  we have $r_2 = \lfloor \frac{ n^{d-1}}{n-1}    \rfloor $, therefore the corresponding  summand    gives asymptotically $\frac{d(d-1)}{2} n^{2d}$   non-degenerate  $2$-dimensional cuboctahedra. All other summands for $k \geq 3$  in Theorem~\ref{cublow}  are $o (n^{2d})$. 
\end{proof}

\section{Enumeration results} \label{enumersec}

In this section, we provide several enumeration results on the numbers of cuboctahedra in latin hypercubes of small order and dimension.  They are based on the collection of representatives of all main classes of small latin squares by McKay~\cite{mckay.LSsite} and  small latin hypercubes by McKay and Wanless~\cite{KayWan.census}.   For each  $d$-dimensional latin hypercube $Q$ of order $n$ and $0 \leq k \leq d$, we use exhaustive enumeration to count the number of $k$-dimensional cuboctahedra  in $Q$ and  in  all $d$ conjugates of $Q$ obtained by  the transposition of the symbol position and the $i$-th position in indices,  $i = 1, \ldots, d$. The Python code for enumerating cuboctahedra in a given latin hypercube can be found at~\cite{cuboctahedra.code}.

Below, we present the maximum and minimum numbers of cuboctahedra in latin hypercubes of small order and dimension, along with the hypercubes that achieve these numbers.  Moreover, we compare the minimum numbers of cuboctahedra with the theoretical lower bounds proved in Section~\ref{lowboundsec}. We start with latin squares, i.e., $2$-dimensional latin hypercubes.

\textbf{Latin squares of order $n \leq 4$} are isotopic to the Cayley tables of groups and have $n^5$ cuboctahedra. 

\textbf{Latin squares of order $5$}.
There are only two isotopy classes of latin squares of order $5$: one of them is isotopic to the Cayley table of the group $\mathbb{Z}_5$, and the other corresponds to the quasigroup $Q_5$.  The numbers of cuboctahedra of dimension $k$ are given in the table below. Hereinafter the last row of tables contains the lower bounds on the numbers of cuboctahedra   proved in Section~\ref{lowboundsec}.

$$
\begin{array}{|c||c|c|c|c|}
\hline
k & 0 & 1 & 2 & all \\
\hline
\hline
\mathbb{Z}_5 & 125 & 1000 & 2000 & 3125 \\
Q_5 & 125 & 1000 & 824 & 1949 \\
\hline
LB & 125 & 1000 & 680 & 1805 \\
\hline
\end{array}
$$

\textbf{Latin squares of order $6$.}
 There are  two  latin squares with  the maximum number of cuboctahedra   that are isotopic to the Cayley table of a group, namely, the groups $\mathbb{Z}_6$ or $S_3$.  The minimal number of cuboctahedra is achieved in all conjugates of the following  latin square:

$$
\begin{array}{cccccc}
0 & 1 & 2 & 3 & 4 & 5 \\
1 & 0 & 4 & 5 & 3 & 2 \\
2 & 3 & 5 & 1 & 0 & 4 \\
3 & 2 & 1 & 4 & 5 & 0 \\
4 & 5 & 3 & 0 & 2 & 1 \\
5 & 4 & 0 & 2 & 1 & 3
\end{array}
$$

The comparison of the  maximum, minimum, and lower bounds on the numbers of $k$-di\-men\-si\-onal cuboctahedra is given in the table below.

$$
\begin{array}{|c||c|c|c|c|}
\hline
k & 0 & 1 & 2 & all \\
\hline
\hline
\max & 216 & 2160 & 5400 & 7776 \\
\min  & 216 & 2160 & 1920 & 4296 \\
\hline
LB & 216 & 2160 & 1440 & 3816 \\
\hline
\end{array}
$$

 \textbf{Latin squares of order $7$.}
There is only one class of latin squares of order $7$ with the maximum number of cuboctahedra, namely the Cayley table of the group $\mathbb{Z}_7$. The minimum number of cuboctahedra is achieved at  six conjugates of  the following latin square:

$$
\begin{array}{ccccccc}
0 & 1 & 2 & 3 & 4 & 5 & 6 \\
1 & 2 & 4 & 6 & 5 & 3 & 0 \\
2 & 6 & 3 & 4 & 1 & 0 & 5 \\
3 & 0 & 6 & 5 & 2 & 4 & 1 \\
4 & 5 & 0 & 1 & 3 & 6 & 2 \\
5 & 4 & 1 & 0 & 6 & 2 & 3 \\
6 & 3 & 5 & 2 & 0 & 1 & 4
\end{array}
$$

The comparison of the  maximum, minimum, and the lower bounds on the  numbers of $k$-dimensional cuboctahedra in latin squares of order $7$ is given in the table below:

$$
\begin{array}{|c||c|c|c|c|}
\hline
k & 0 & 1 & 2 & all \\
\hline
\hline
\max & 343 & 4116 & 12348 & 16807 \\
\min  & 343 & 4116 & 3880 & 8339 \\
\hline
LB & 343 & 4116 & 2688 &  7147 \\
\hline
\end{array}
$$

Now let us study the numbers of cuboctahedra in latin hypercubes.
It is known that every \textbf{$d$-dimensional latin hypercube of order $3$} is isotopic to the iterated group $\mathbb{Z}_3$, so it has $3^{3d-1}$ cuboctahedra.

\textbf{$3$-Dimensional latin hypercubes of order $4$}.
We verified that any conjugate of a  given $3$-dimensional latin hypercube of order $4$  has the same number of cuboctahedra.  The maximum number of cuboctahedra occurs in three main classes of latin hypercubes: the iterated groups $\mathbb{Z}_4$ and $\mathbb{Z}_2^2$ and  a composition of $\mathbb{Z}_4$ and $\mathbb{Z}_2^2$.  The minimum number of cuboctahedra is achieved at the following latin hypercube:

$$
\begin{array}{cccc|cccc|cccc|cccc}
0 & 1 & 2 & 3 &  1 & 0 & 3 & 2 &  2 & 3 & 0 & 1 &  3 & 2 & 1 & 0 \\ 
1 & 0 & 3 & 2 &  0 & 1 & 2 & 3 &  3 & 2 & 1 & 0 &  2 & 3 & 0 & 1 \\ 
2 & 3 & 0 & 1 &  3 & 2 & 1 & 0 &  0 & 1 & 3 & 2 &  1 & 0 & 2 & 3 \\
3 & 2 & 1 & 0 &  2 & 3 & 0 & 1 &  1 & 0 & 2 & 3 &  0 & 1 & 3 & 2
\end{array}
$$

Let us see how the theoretical results of  Section~\ref{lowboundsec}  estimate  from below the number of cuboctahedra in $3$-dimensional latin hypercubes of order $4$. The numbers of $0$-dimensional and $1$-dimensional cuboctahedra are given by Propositions~\ref{0dimcub} and~\ref{1dimcub}, and the number of degenerate $k$-dimensional cuboctahedra is given by Proposition~\ref{degenercub}.  For the number of $2$-dimensional cuboctahedra,  the fact, that each of $12$  $2$-dimensional planes of a hypercube of order $4$ is isotopic to the Cayley table of a group and contains $4^5 = 1024$ cuboctahedra, implies a better lower bound than Propositions~\ref{degenercub} and~\ref{kdimnondeg}.

The following table, we  compare the maximum, minimum and the theoretical lower bounds on the numbers of $k$-dimensional cuboctahedra in $3$-dimensional latin hypercubes of order $4$:

$$
\begin{array}{|c||c|c|c|c|c|}
\hline
k & 0 & 1 & 2 & 3 & all \\
\hline
\hline
\max & 1024 & 9216 & 27648 & 27648 & 65536 \\
\min  & 1024 & 9216 & 21504 & 15360  & 47104\\
\hline
 LB & 1024 & 9216 & 12288 &  1728 & 24256 \\
 \hline
\end{array}
$$

\textbf{$3$-Dimensional latin hypercubes of order $5$.}   
Only  one latin $3$-dimensional  hypercube of order $5$ has the maximum number of cuboctahedra,  namely the iterated group $\mathbb{Z}_5$. The minimum number of cuboctahedra is achieved at all   conjugates of  the following latin hypercube:

$$
\begin{array}{ccccc|ccccc|ccccc|ccccc|ccccc}
0 & 1 & 2 & 3 & 4 &  1 & 0 & 3 & 4 & 2 &  2 & 3 & 4 & 0 & 1 &  3 & 4 & 1 & 2 & 0 &  4 & 2 & 0 & 1 & 3 \\
1 & 0 & 3 & 4 & 2 &  2 & 3 & 4 & 1 & 0 &  0 & 4 & 1 & 2 & 3 &  4 & 2 & 0 & 3 & 1 &  3 & 1 & 2 & 0 & 4 \\
2 & 4 & 1 & 0 & 3 &  4 & 2 & 0 & 3 & 1 &  3 & 0 & 2 & 1 & 4 &  0 & 1 & 3 & 4 & 2 &  1 & 3 & 4 & 2 & 0 \\
3 & 2 & 4 & 1 & 0 &  0 & 4 & 1 & 2 & 3 &  4 & 1 & 0 & 3 & 2 &  1 & 3 & 2 & 0 & 4 &  2 & 0 & 3 & 4 & 1 \\ 
4 & 3 & 0 & 2 & 1 &  3 & 1 & 2 & 0 & 4 &  1 & 2 & 3 & 4 & 0 &  2 & 0 & 4 & 1 & 3 &  0 & 4 & 1 & 3 & 2 
\end{array}
$$

Let us see how the theoretical reasoning of  Section~\ref{lowboundsec}  estimates  the numbers of cuboctahedra in $3$-dimensional latin hypercubes of order $5$.   Using again Propositions~\ref{0dimcub},~\ref{1dimcub}, and~\ref{degenercub},  we determine the numbers of  $0$-dimensional and $1$-dimensional cuboctahedra,  as well as the number of degenerate $k$-dimensional cuboctahedra.

To estimate the number of non-degenerate $2$-dimensional cuboctahedra, we refine  the proof of Proposition~\ref{kdimnondeg}. For this purpose, let us compare the number of submatrices with a given corner with the number of their possible fillings.  Without loss of generality, we may assume that the corner symbol is $0$. Then there are the following types of patterns of symbols in $2$-dimensional matrices of order $2$, where the corner of the submatrix is  always left-up   $0$,   different/same digits denote different/same symbols (not necessarily $1$ and $2$):

$$
(1) ~\begin{array}{cc}
0 & 1 \\ 1 & \cdot
\end{array} ~~~~
(2) ~ \begin{array}{cc}
0 & 1 \\ 2 & \cdot
\end{array}
$$
Let $N_i$ be the number of  different patterns of type $(i)$ and  $P_i$ be the number of ways to complete a pattern of type   $(i)$ to a latin submatrix of order $2$. Since for each direction of $2$-dimensional planes in a  $3$-dimensional latin hypercube of order $5$ there are exactly $25$ appearances of a given pattern, we get that each submatrix should appear at least   $R_i = \lfloor 25  / P_i  \rfloor$ times. In particular, for our patterns we have the following values:

$$
\begin{array}{c||c|c}
i & 1 & 2 \\
\hline
N_i & 4  & 12  \\
P_i & 4  & 3 \\
R_i & 6 &  8 \\
\end{array}
$$
So every $3$-dimensional latin hypercube of order $5$ has in addition 
$$  5  {3 \choose 2}  \left( \sum\limits_{i=1}^2 N_i P_i R_i(R_i - 1)  \right)   = 37440 $$  
non-degenerate $2$-dimensional cuboctahedra. 

It can be checked that every $3$-dimensional pattern in a $3$-dimensional latin hypercube of order $5$ can be completed in  many ways $P_i$  so that   the ratio  $R_i = \lfloor 25  / P_i  \rfloor$  is less than $2$. Thus, we cannot guarantee the existence of non-degenerate $3$-dimensional cuboctahedra. 

The comparison of the maximum and minimum numbers of $k$-dimensional cuboctahedra in $3$-dimensional latin hypercube of order $5$  with theoretical lower bounds is given in the table below.

$$
\begin{array}{|c||c|c|c|c|c|}
\hline
k & 0 & 1 & 2 & 3 & all \\
\hline
\hline
\max & 3125 & 37500 & 150000 & 200000 & 390625 \\
\min  & 3125 & 37500 & 53136 & 9984  & 103745\\
\hline
LB  & 3125 & 37500 & 43440 & 3375  & 87440\\
\hline
\end{array}
$$

\textbf{$4$-Dimensional latin hypercubes of order $4$.}
There are $8$  representatives from different main classes having the maximum number of cuboctahedra.  It is interesting to note that there is a $4$-dimensional latin hypercube of order $4$ with the maximum number of cuboctahedra, such that all its nontrivial conjugates have a non-maximal number of cuboctahedra. 

 The minimal number of cuboctahedra is achieved at the following latin hypercube:
 
 $$
 \begin{array}{cccc|cccc|cccc|cccc}
 0 &  1 &  2 &  3 &  1 &  0 &  3 &  2 &  2 &  3 &  1 &  0 & 3 &  2 &  0 &  1 \\   
 1 &  0 &  3 &  2  & 0 &  1 &  2 &  3 &   3 &  2 &  0 &  1 &   2 &  3 &  1 &  0 \\
 2 &  3 &  1 &  0 &   3 &  2 &  0 &  1 &   1 &  0 &  3 &  2 &   0 &  1 &  2 &  3 \\
 3 &  2 &  0 &  1 &   2 &  3 &  1 &  0 &   0 &  1 &  2 &  3 &   1 &  0 &  3 &  2 \\  
 \hline
  1 &  0 &  3 &  2 &   2 &  3 &  1 &  0 &   3 &  2 &  0 &  1 &   0 &  1 &  2 &  3 \\  
 2 &  3 &  1 &  0 &   1 &  0 &  3 &  2 &   0 &  1 &  2 &  3 &   3 &  2 &  0 &  1 \\  
 3 &  2 &  0 &  1 &   0 &  1 &  2 &  3 &   2 &  3 &  1 &  0 &   1 &  0 &  3 &  2 \\ 
 0 &  1 &  2 &  3 &   3 &  2 &  0 &  1 &   1 & 0 &  3 &  2 &   2 &  3 &  1 &  0 \\  
 \hline
  2 &  3 &  1 &  0 &   3 &  2 &  0 &  1 &   0 &  1 &  2 &  3 &   1 &  0 &  3 &  2 \\   
  3 &  2 &  0 &  1 &   2 &  3 &  1 &  0 &   1 &  0 &  3 &  2 &   0 &  1 &  2 &  3 \\  
  0 &  1 &  2 &  3 &   1 &  0 &  3 &  2 &   3 &  2 &  0 &  1 &   2 &  3 &  1 &  0 \\   
  1 &  0 &  3 &  2 &   0 &  1 &  2 &  3 &   2 &  3 &  1 &  0 &   3 &  2 &  0 &  1 \\  
  \hline
  3 &  2 &  0 &  1 &   0 &  1 &  2 &  3 &   1 &  0 &  3 &  2 &   2 &  3 &  1 &  0 \\   
  0 &  1 &  2 &  3 &   3 &  2 &  0 &  1 &   2 &  3 &  1 &  0 &   1 &  0 &  3 &  2 \\  
  1 &  0 &  3 &  2 &   2 &  3 &  1 &  0 &   0 &  1 &  2 &  3 &   3 &  2 &  0 &  1 \\  
  2 &  3 &  1 &  0 &   1 &  0 &  3 &  2 &   3 &  2 &  0 &  1 &   0 &  1 &  2 &  3
 \end{array}
 $$

Let us compute the theoretical lower bounds. Propositions~\ref{0dimcub},~\ref{1dimcub}, and~\ref{degenercub},  again give  the numbers of  $0$-dimensional and $1$-dimensional cuboctahedra  and  the number of degenerate $k$-dimensional cuboctahedra. 
Moreover, in this case the lower bound  on  $2$-dimensional non-degenerate cuboctahedra from Proposition~\ref{kdimnondeg} is better than one obtained from the group structure of $2$-dimensional planes. 

Indeed,  consider the  following types of patterns in $2$-dimensional matrices of order $2$ 
$$
(1) ~\begin{array}{cc}
0 & 1 \\ 1 & \cdot
\end{array} ~~~
(2) ~ \begin{array}{cc}
0 & 1 \\ 2 & \cdot
\end{array}
$$
 where the corner symbol is always $0$. Then the numbers   $N_i$ of  different patterns of type $(i)$,   the numbers $P_i$  of ways to complete a pattern of type   $(i)$ to a latin  submatrix of order $2$,  and   $R_i = \lfloor 64  / P_i  \rfloor$ are
$$
\begin{array}{c||c|c}
i & 1 & 2 \\
\hline
N_i & 3  & 6  \\
P_i & 3  & 2 \\
R_i & 21 &  32 \\
\end{array}
$$
So every $4$-dimensional latin hypercube of order $4$ has 
$$  4 \cdot  {4 \choose 2}  \left( \sum\limits_{i=1}^2 N_i P_i R_i(R_i - 1)  \right)   = 376416 $$  
non-degenerate $2$-dimensional cuboctahedra. 

Similarly, we can obtain an estimation for the number on non-degenerate $3$-dimensional cuboctahedra.
There are the following types of patterns in $3$-dimensional matrices of order $2$ 
$$
(1) ~\begin{array}{cc|cc}
0 & 1 & 1 & \cdot \\ 1 & \cdot & \cdot & \cdot
\end{array} ~~~
(2) ~\begin{array}{cc|cc}
0 & 1 & 2 & \cdot \\ 1 & \cdot & \cdot & \cdot
\end{array} ~~~
(3) ~\begin{array}{cc|cc}
0 & 1 & 3 & \cdot \\ 2 & \cdot & \cdot & \cdot
\end{array} ~~~
$$
The numbers $N_i$ of  different patterns of type $(i)$,   the numbers $P_i$  of ways to complete a pattern of type   $(i)$ to a latin submatrix of order $2$  and   $R_i = \lfloor 64  / P_i  \rfloor$ are
$$
\begin{array}{c||c|c|c}
i & 1 & 2 & 3 \\
\hline
N_i & 3  & 18 &  6  \\
P_i & 51  & 24  & 13  \\
R_i & 1 &  2 & 4  \\
\end{array}
$$

So every $4$-dimensional latin hypercube of order $4$ has 
$$  4 \cdot  {4 \choose 3}  \left( \sum\limits_{i=1}^3 N_i P_i R_i(R_i - 1)  \right)   = 28800 $$  
non-degenerate $3$-dimensional cuboctahedra. 

It can be checked, that every $4$-dimensional pattern in a $4$-dimensional latin hypercube of order $4$  can   be completed to a submatrix  in many ways $P_i$ so that   the ratio  $R_i = \lfloor 64  / P_i  \rfloor$  is less than $2$. Thus we cannot guarantee the existence of non-degenerate $4$-dimensional cuboctahedra. 

The comparison of the maximum and minimum numbers of $k$-dimensional  cuboctahedra with the theoretical lower bounds  is given in the table below.

$$
\begin{array}{|c||c|c|c|c|c|c|}
\hline
k & 0 & 1 & 2 & 3 & 4 & all \\
\hline
\hline
\max & 16384 & 196608 & 884736 & 1769572 & 1327104 & 4194304 \\
\min  & 16384 & 196608 & 698368 & 903168  & 405504 & 2220032 \\
\hline
LB  & 16384 & 196608 & 390240 &  56448 & 20736 & 680416 \\
\hline
\end{array}
$$

\textbf{$4$-Dimensional latin hypercube of order $5$.}
Up to isotopy, there is only one $4$-dimensional  latin hypercube of order $5$ containing   the maximum number of cuboctahedra, namely the iterated group $\mathbb{Z}_5$.  The minimum number of cuboctahedra is achieved at the following latin hypercube:

$$\begin{array}{ccccc|ccccc|ccccc|ccccc|ccccc}
0 & 1 & 2 & 3 & 4 & 1 & 0 & 3 & 4 & 2 & 4 & 2 & 0 & 1 & 3 & 2 & 3 & 4 & 0 & 1 & 3 & 4 & 1 & 2 & 0 \\ 
1 & 0 & 3 & 4 & 2 & 0 & 1 & 4 & 2 & 3 & 3 & 4 & 2 & 0 & 1 & 4 & 2 & 1 & 3 & 0 & 2 & 3 & 0 & 1 & 4 \\ 
4 & 2 & 0 & 1 & 3 & 3 & 4 & 2 & 0 & 1 & 2 & 3 & 1 & 4 & 0 & 1 & 0 & 3 & 2 & 4 & 0 & 1 & 4 & 3 & 2 \\ 
2 & 3 & 4 & 0 & 1 & 4 & 2 & 1 & 3 & 0 & 0 & 1 & 3 & 2 & 4 & 3 & 4 & 0 & 1 & 2 & 1 & 0 & 2 & 4 & 3 \\ 
3 & 4 & 1 & 2 & 0 & 2 & 3 & 0 & 1 & 4 & 1 & 0 & 4 & 3 & 2 & 0 & 1 & 2 & 4 & 3 & 4 & 2 & 3 & 0 & 1 \\ 
\hline
1 & 0 & 4 & 2 & 3 & 0 & 1 & 2 & 3 & 4 & 2 & 4 & 3 & 0 & 1 & 3 & 2 & 1 & 4 & 0 & 4 & 3 & 0 & 1 & 2 \\ 
0 & 1 & 2 & 3 & 4 & 1 & 0 & 3 & 4 & 2 & 4 & 3 & 1 & 2 & 0 & 2 & 4 & 0 & 1 & 3 & 3 & 2 & 4 & 0 & 1 \\ 
2 & 4 & 3 & 0 & 1 & 4 & 3 & 1 & 2 & 0 & 3 & 2 & 0 & 1 & 4 & 0 & 1 & 4 & 3 & 2 & 1 & 0 & 2 & 4 & 3 \\ 
3 & 2 & 1 & 4 & 0 & 2 & 4 & 0 & 1 & 3 & 1 & 0 & 4 & 3 & 2 & 4 & 3 & 2 & 0 & 1 & 0 & 1 & 3 & 2 & 4 \\ 
4 & 3 & 0 & 1 & 2 & 3 & 2 & 4 & 0 & 1 & 0 & 1 & 2 & 4 & 3 & 1 & 0 & 3 & 2 & 4 & 2 & 4 & 1 & 3 & 0 \\ 
\hline
4 & 3 & 0 & 1 & 2 & 2 & 4 & 1 & 0 & 3 & 3 & 1 & 2 & 4 & 0 & 1 & 0 & 3 & 2 & 4 & 0 & 2 & 4 & 3 & 1 \\ 
2 & 4 & 1 & 0 & 3 & 3 & 2 & 0 & 1 & 4 & 1 & 0 & 4 & 3 & 2 & 0 & 3 & 2 & 4 & 1 & 4 & 1 & 3 & 2 & 0 \\ 
3 & 1 & 2 & 4 & 0 & 1 & 0 & 4 & 3 & 2 & 0 & 4 & 3 & 2 & 1 & 4 & 2 & 1 & 0 & 3 & 2 & 3 & 0 & 1 & 4 \\
1 & 0 & 3 & 2 & 4 & 0 & 3 & 2 & 4 & 1 & 4 & 2 & 0 & 1 & 3 & 2 & 1 & 4 & 3 & 0 & 3 & 4 & 1 & 0 & 2 \\ 
0 & 2 & 4 & 3 & 1 & 4 & 1 & 3 & 2 & 0 & 2 & 3 & 1 & 0 & 4 & 3 & 4 & 0 & 1 & 2 & 1 & 0 & 2 & 4 & 3 \\ 
\hline
2 & 4 & 3 & 0 & 1 & 3 & 2 & 4 & 1 & 0 & 0 & 3 & 1 & 2 & 4 & 4 & 1 & 0 & 3 & 2 & 1 & 0 & 2 & 4 & 3 \\ 
3 & 2 & 4 & 1 & 0 & 4 & 3 & 2 & 0 & 1 & 2 & 1 & 0 & 4 & 3 & 1 & 0 & 3 & 2 & 4 & 0 & 4 & 1 & 3 & 2 \\ 
0 & 3 & 1 & 2 & 4 & 2 & 1 & 0 & 4 & 3 & 1 & 0 & 4 & 3 & 2 & 3 & 4 & 2 & 1 & 0 & 4 & 2 & 3 & 0 & 1 \\ 
4 & 1 & 0 & 3 & 2 & 1 & 0 & 3 & 2 & 4 & 3 & 4 & 2 & 0 & 1 & 0 & 2 & 1 & 4 & 3 & 2 & 3 & 4 & 1 & 0 \\ 
1 & 0 & 2 & 4 & 3 & 0 & 4 & 1 & 3 & 2 & 4 & 2 & 3 & 1 & 0 & 2 & 3 & 4 & 0 & 1 & 3 & 1 & 0 & 2 & 4 \\ 
\hline
3 & 2 & 1 & 4 & 0 & 4 & 3 & 0 & 2 & 1 & 1 & 0 & 4 & 3 & 2 & 0 & 4 & 2 & 1 & 3 & 2 & 1 & 3 & 0 & 4 \\ 
4 & 3 & 0 & 2 & 1 & 2 & 4 & 1 & 3 & 0 & 0 & 2 & 3 & 1 & 4 & 3 & 1 & 4 & 0 & 2 & 1 & 0 & 2 & 4 & 3 \\ 
1 & 0 & 4 & 3 & 2 & 0 & 2 & 3 & 1 & 4 & 4 & 1 & 2 & 0 & 3 & 2 & 3 & 0 & 4 & 1 & 3 & 4 & 1 & 2 & 0 \\ 
0 & 4 & 2 & 1 & 3 & 3 & 1 & 4 & 0 & 2 & 2 & 3 & 1 & 4 & 0 & 1 & 0 & 3 & 2 & 4 & 4 & 2 & 0 & 3 & 1 \\ 
2 & 1 & 3 & 0 & 4 & 1 & 0 & 2 & 4 & 3 & 3 & 4 & 0 & 2 & 1 & 4 & 2 & 1 & 3 & 0 & 0 & 3 & 4 & 1 & 2
\end{array}
$$

From Propositions~\ref{0dimcub},~\ref{1dimcub}, and~\ref{degenercub},  we find the numbers of  $0$-dimensional and $1$-dimensional cuboctahedra  and  the number of degenerate $k$-dimensional cuboctahedra.

Let us see how  Proposition~\ref{kdimnondeg} estimates the number of non-degenerate cuboctahedra in $4$-dimensional latin hypercubes of order $5$. 

As before, there are the  following types of patterns in $2$-dimensional  submatrices of order $2$ 
$$
(1) ~\begin{array}{cc}
0 & 1 \\ 1 & \cdot
\end{array} ~~~
(2) ~ \begin{array}{cc}
0 & 1 \\ 2 & \cdot
\end{array}
$$
 where the corner symbol is always $0$. The numbers   $N_i$ of  different patterns of type $(i)$,   the numbers $P_i$  of ways to complete a pattern of type   $(i)$ to a  submatrix of order $2$  and   $R_i = \lfloor 125  / P_i  \rfloor$ are
$$
\begin{array}{c||c|c}
i & 1 & 2 \\
\hline
N_i & 4  & 12  \\
P_i & 4  & 3 \\
R_i & 31 &  41 \\
\end{array}
$$
So every $4$-dimensional latin hypercube of order $5$ has 
$$  5 \cdot  {4 \choose 2}  \left( \sum\limits_{i=1}^2 N_i P_i R_i(R_i - 1)  \right)   = 2217600  $$  
non-degenerate $2$-dimensional cuboctahedra.

It can be checked, that are many ways to complete  $3$-dimensional and $4$-dimensional   patterns in a $4$-dimensional latin hypercube of order $5$, so we cannot guarantee the existence of non-degenerate  cuboctahedra of these dimensions.

In the following table, we  compare  of the maximum and minimum numbers of cuboctahedra in $4$-dimensional latin hypercubes of order $5$ with theoretical lower bounds.

$$
\begin{array}{|c||c|c|c|c|c|c|}
\hline
k & 0 & 1 & 2 & 3 & 4 & all \\
\hline
\hline
\max & 78125 & 1250000 & 7500000 & 20000000 & 20000000 & 48828125 \\
\min  & 78125 & 1250000 & 2551712 & 923216  & 267344 & 5080397 \\
\hline
LB  & 78125 & 1250000 & 2277600 & 160000  &  160000  &  3925725  \\
\hline
\end{array}
$$

\section*{Acknowledgements}

The  work is carried out within the framework of the state contract of the Sobolev Institute of Mathematics (project no. FWNF-2026-0011). The author is grateful to Kamil' Gareev for computation of the numbers of cuboctahedra in latin squares of orders $n \leq 7$. 

\begin{bibdiv}
    \begin{biblist}[\normalsize]
    \bibselect{biblio}
    \end{biblist}
    \end{bibdiv}

\end{document}